\theoremstyle{plain}
\newtheorem{theorem}{Theorem}[section]
\newtheorem{lemma}[theorem]{Lemma}
\theoremstyle{remark}
\newtheorem{remark}[theorem]{Remark}
\theoremstyle{definition}
\newtheorem{definition}[theorem]{Definition}
\newcommand{\C}{\mathbf{C}}
\newcommand{\Q}{\mathbf{Q}}
\newcommand{\Z}{\mathbf{Z}}
\newcommand{\Ps}{\mathbf{P}}
\DeclareMathOperator{\sing}{sing}
\DeclareMathOperator{\van}{van}
\numberwithin{equation}{section}
\title[Noether--Lefschetz for singular threefolds]{Noether--Lefschetz theorem for hypersurface sections of singular threefolds}
\author[R.~Kloosterman]{Remke Kloosterman}
\address{Institut f\"ur Mathematik, Humboldt-Universit\"at zu Berlin,
Unter den Linden 6, D-10099 Berlin, Germany} 
\email{klooster@math.hu-berlin.de}
\thanks{The  author is partially supported by DFG-grant KL 2244/2-1.}
\subjclass{}
\begin{document}

\begin{abstract}
We prove a Noether--Lefschetz-type result for certain linear systems  on a projective threefold with isolated singularities.
\end{abstract}

\maketitle

\section{Introduction}
Let $X\subset \Ps^N$ be a smooth threefold and fix a degree $d\geq 1$. Let  $H$ be a very general hypersurface of  degree $d$ and let $X_H$ be the intersection $X\cap H$.
The Noether--Lefschetz theorem now states that either the Picard numbers or $X$ and $X_H$ coincide or the geometric genus of $X_H$ vanishes.

The aim of this paper is to extend this to the case where $X$ has isolated singularities. However, if $X$ is not $\Q$-factorial then the Picard numbers of $X$ and $X_H$ differ. To exclude such examples we will require  $h^4(X)=1$. We will make a further assumption in order to simplify our proof, namely we assume that $X$ admits a small resolution. However, we believe that with much more work one can avoid posing this condition. Our main result is

\begin{theorem}
 Let $X\subset \Ps^N$ be a threefold with isolated singularities, such that $h^4(X)=1$ holds. Suppose that $X$ admits a (non-projective) small resolution. Then for a very general hypersurface $H$ of degree $d\geq 1$ we have that  either $\rho(X_H)=1$ or $p_g(X_H)=0$ holds.
\end{theorem}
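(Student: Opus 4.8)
The plan is to pass to the given small resolution $\sigma\colon\tilde X\to X$ in order to do Hodge theory, while running the actual Noether--Lefschetz argument on the family of smooth surfaces. Since $X$ has only finitely many singular points and $\cO_X(1)$ is very ample, a very general $H$ avoids $\sing X$; hence $X_H$ is a smooth projective surface and $\sigma$ restricts to an isomorphism $\tilde X_H:=\sigma^{-1}(X_H)\xrightarrow{\sim}X_H$. I would let $U\subset|\cO_X(d)|$ be the nonempty Zariski-open locus of such $H$ and consider the tautological family $\mathcal{X}\to U$; its total space lies over $X\setminus\sing X$ and is smooth, so the $H^2$ of the fibres is a genuine polarized variation of Hodge structure of weight $2$ on $U$. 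The resolution itself enters only to pin down Hodge numbers.

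Next I would exploit that $\tilde X$, being a smooth compact threefold bimeromorphic to the projective $X$, is Moishezon, hence lies in Fujiki's class $\mathcal{C}$: it carries a pure Hodge decomposition with Hodge symmetry $h^{p,q}=h^{q,p}$, and Serre duality gives $h^{p,q}=h^{3-p,3-q}$. The exceptional locus $E=\sigma^{-1}(\sing X)$ is a curve by smallness, so $H^3(E)=H^4(E)=0$ and $H^{\ge 1}(\sing X)=0$; the Mayer--Vietoris sequence of the resolution square then gives $h^4(\tilde X)=h^4(X)=1$. Writing $1=h^4=2h^{3,1}+h^{2,2}$ forces $h^{3,1}=h^{1,3}=0$ and $h^{2,2}=1$, whence by the two dualities $h^{1,1}=h^{2,2}=1$ and $h^{2,0}=h^{1,3}=0$. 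Thus $h^2(\tilde X)=1$ and its generator is of type $(1,1)$.

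Restriction along $\tilde X_H\hookrightarrow\tilde X$ is a morphism of pure Hodge structures, so its image $F\subseteq H^2(X_H,\Q)$ is a sub-Hodge-structure. The generator of $H^2(\tilde X)$ restricts to the nonzero hyperplane class of $X_H$, so $\dim F=1$; and since $h^{2,0}(\tilde X)=0$ the class is of type $(1,1)$, so $F$ is generated by an algebraic class with $F^{2,0}=0$. The polarized orthogonal complement $H^2_{\van}(X_H)$ is then a sub-Hodge-structure with $H^{2,0}_{\van}=H^{2,0}(X_H)$; in particular $p_g(X_H)=\dim H^{2,0}_{\van}$, so $p_g(X_H)=0$ is equivalent to $H^{2,0}_{\van}=0$.

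If $H^{2,0}_{\van}=0$ we are in the case $p_g(X_H)=0$ and there is nothing to prove, so I assume $H^{2,0}_{\van}\neq 0$. The heart of the argument, which I expect to be the main obstacle, is then the Noether--Lefschetz statement for the variable part: I would show that the Hodge locus of $H^2_{\van}$ in $U$ is a countable union of \emph{proper} closed subvarieties, so that a very general $H\in U$ carries no Hodge class outside $F$. Two inputs are needed. First, via a Lefschetz pencil in $|\cO_X(d)|$ whose members avoid $\sing X$, the monodromy acts irreducibly and nontrivially on $H^2_{\van}(X_H)$, so this local system has no nonzero invariants; combined with Deligne's global invariant cycle theorem, this identifies the monodromy-invariant part of $H^2(X_H,\Q)$ with $F$. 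Second, since $H^{2,0}_{\van}\neq 0$ the variable variation is nonisotrivial, so by the theory of Hodge loci (Cattani--Deligne--Kaplan, together with the vanishing of invariants just established) every component of the Hodge locus carrying a nonzero variable class is proper in $U$. Intersecting the countably many such loci, a very general $X_H$ has $\mathrm{NS}(X_H)\otimes\Q\subseteq F$; as $F$ is one-dimensional and contains the hyperplane class, $\rho(X_H)=1$. The delicate points are verifying that a general pencil through the smooth sections is genuinely Lefschetz despite the singularities of $X$ (handled by working over $X\setminus\sing X$, where $\cO_X(d)$ is very ample), and that the nonvanishing of $H^{2,0}_{\van}$ really drives the period map, which is precisely what produces the dichotomy with $p_g$.
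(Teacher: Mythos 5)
Your Hodge-theoretic bookkeeping on the small resolution is fine and even elegant: Moishezon implies Fujiki class $\mathcal{C}$, so the pure Hodge structure plus $h^4=1$ forces $h^{2,0}=0$ and $h^2=1$ for the small resolution; this recovers (and refines) Lemma~\ref{lembe}, and defining the variable part as the orthogonal complement of the one-dimensional image $F$ neatly replaces the paper's ad hoc cup-product decomposition. The fatal gap is the sentence ``via a Lefschetz pencil in $|\cO_X(d)|$ whose members avoid $\sing X$'': no such pencil exists. For any point $x\in X$ the members containing $x$ form a hyperplane in the projective space $|\cO_X(d)|$, and a line meets every hyperplane; so the members of \emph{any} pencil sweep out all of $X$, in particular some member passes through each $p\in\sing X$, and such a member is automatically singular at $p$ (this is exactly why the discriminant contains the hyperplanes $\Delta_p$ in Lemma~\ref{lemDis}). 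Consequently your parameter space $U$ has, besides the honest discriminant component $\Delta^0$, the extra boundary components $\Delta_p=\{H: p\in H\}$, and the monodromy of your family includes loops around each $\Delta_p$ which your argument never controls. This also breaks the irreducibility step: the discriminant is \emph{not} irreducible here, so the standard proof that all vanishing cycles are conjugate (which needs irreducibility of the discriminant) does not apply to the family over $U$ as you set it up. The parenthetical fix --- working ``over $X\setminus\sing X$, where $\cO_X(d)$ is very ample'' --- does not help: very ampleness away from the singularities cannot prevent pencil members from hitting them.

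This is precisely the main difficulty the paper is written to overcome, and it is where the small resolution is genuinely used --- not, as you say, ``only to pin down Hodge numbers''. The paper constructs the pencil on the small resolution $X'$ (your $\tilde X$): for $t\in\ell\cap\Delta_p$ the section $X_t$ is singular at $p$, but its pullback $X'_t$ is smooth, so $\{X'_t\}$ is a genuine Lefschetz pencil whose only critical values lie on the irreducible component $\Delta^0$. Equivalently, the local system of your family extends across each $\Delta_p$, so the monodromy around $\Delta_p$ is trivial --- but this triviality is a \emph{consequence} of the small resolution hypothesis, not something one may assume; for a threefold without small resolution it can fail. With the pencil on $X'$, conjugacy of vanishing cycles follows from irreducibility of $\Delta^0$ alone, and one still must check that the vanishing cycles span $F^{\perp}$, which requires extending the Lefschetz theory (Voisin's Corollaries 2.20 and 2.23) to the non-K\"ahler manifold $X'$ --- another point the paper argues and your proposal glosses over. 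Without these inputs the monodromy-irreducibility claim, and hence the whole dichotomy between $\rho(X_H)=1$ and $p_g(X_H)=0$, remains unproven.
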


The strategy of the proof is similar to the classical proof of the Noether--Lefschetz theorem as one can find for example in \cite{Voi2}:
Let $Y=X_H$. Then we can consider $Y$ as a hypersurface in $X$ as well as a hypersurface in a small resolution $X'$ of $X$. On $X'$ we can
construct Lefschetz pencils  to produce vanishing cycles. We show that the monodromy acts transitively on the set of vanishing cycles. We also show that the subspace $H^2(Y)_{\van}$ generated by the vanishing cycles and $j^*H^2(X')$ generate $H^2(Y)$. From the properties of small resolution it follows easily that $h^2(X')=1$. Since the monodromy acts irreducibly on $H^2(Y)_{\van}$, it follows that the $H^2(Y)_{\van}$ cannot contain a non-trivial Hodge substructure. Therefore $H^2(Y)_{\van}$ is either of pure of type $(1,1)$ (and $p_g(Y)=0$) or does not contain a sub-Hodge structure of pure type $(1,1)$ and, in particular, the rank of $H^2(Y,\Z)\cap H^{1,1}(X,\C)$ is at most $h^2(X')$, which turns out to be one.

The main difficulty in extending the proof for Noether--Lefschetz theorem to our situation is to prove the existence of the decomposition $H^2(Y)=H^2(Y)_{\van}\oplus j^*H^2(X')$. The proof in \cite{Voi2}  uses the hard Lefschetz theorem to obtain this decomposition. The hard Lefschetz theorem requires $X'$ to be K\"ahler. However, if $X$ is singular and $X$ admits a small resolution, which is K\"ahler, then $h^4(X)>1$ holds.  Hence we cannot apply the hard Lefschetz theorem on $X'$. To avoid this problem we could  pass to the big resolution $\tilde{X}$ of $X$. However on $Y$ is not ample on $X$ and therefore we cannot apply the hard Lefschetz theorem on $\tilde{X}$.  Instead we give an ad hoc argument that the cup-product with the fundamental class of $Y$ defines an isomorphism $H^2(X')\to H^4(X')$ and that is where we use the assumption $h^4(X)=1$.

Our main motivation for this result lies in an application. In \cite{KloNod} we prove that a nodal compete intersection threefold with defect and without induced defect has at least $\sum_{i\leq j} (d_i-1)(d_j-1)$ nodes. In the proof we work with a nodal threefold satisfying $h^4(X)=1$ and we apply several times the main result of this paper.

\section{The proof}
We try to follow the proof from \cite{Voi2} as much as possible. We start by giving a preliminary result on small resolutions.
\begin{lemma} \label{lembe} Let $X$ be a projective threefold with isolated singularities admitting a small resolution $X'$. Then
 \[ h^2(X')=h^4(X')=h^4(X).\]
\end{lemma}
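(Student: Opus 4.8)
The plan is to treat the two equalities separately. For $h^2(X')=h^4(X')$ I would simply invoke Poincaré duality. A small resolution is by definition smooth, and being proper over the projective (hence compact) variety $X$, the space $X'$ is a compact complex threefold, i.e.\ a closed oriented real $6$-manifold. Thus $H^2(X',\Q)\cong H^4(X',\Q)$ and the first equality follows. The point worth stressing is that this step uses \emph{only} compactness and smoothness, not projectivity or the K\"ahler condition; this is essential, since in our situation $X'$ is merely a non-projective small resolution and no K\"ahler package is available.

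For $h^4(X')=h^4(X)$ I would analyze the Leray spectral sequence
\[ E_2^{p,q}=H^p(X,R^q\pi_*\Q)\Longrightarrow H^{p+q}(X',\Q)\]
of the resolution map $\pi\colon X'\to X$. The decisive input is the \emph{smallness} of the resolution. Since $X$ has isolated singularities, $\pi$ is an isomorphism over the smooth locus, and the exceptional fibres $\pi^{-1}(p)$ over the finitely many singular points $p$ have complex dimension at most one. By proper base change the stalk of $R^q\pi_*\Q$ at $p$ is $H^q(\pi^{-1}(p),\Q)$, which vanishes for $q\geq 3$ because $\pi^{-1}(p)$ has real dimension at most two. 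Hence for $q\geq 1$ the sheaf $R^q\pi_*\Q$ is a skyscraper sheaf supported on the finite singular set, and it vanishes identically for $q\geq 3$.

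I would then read off the terms of total degree four. A skyscraper sheaf has no higher cohomology, so $E_2^{p,q}=0$ for $p\geq 1$ and $q\in\{1,2\}$; together with $R^q\pi_*\Q=0$ for $q\geq 3$ this kills every $E_2^{p,q}$ with $p+q=4$ except $E_2^{4,0}=H^4(X,R^0\pi_*\Q)$. Here $R^0\pi_*\Q$ differs from the constant sheaf $\Q_X$ only by a skyscraper sheaf (recording possibly disconnected fibres), which contributes nothing in cohomological degree $\geq 2$, so $E_2^{4,0}\cong H^4(X,\Q)$. It remains to check that this corner survives to $E_\infty$: differentials out of $E_r^{4,0}$ land in negative $q$ and vanish, while differentials into it come from $E_r^{4-r,r-1}$, which for $r=2,3$ is $E_2^{2,1}$ resp.\ $E_2^{1,2}$ (both zero, being cohomology of a skyscraper in positive degree) and for $r\geq 4$ involves $R^q\pi_*\Q$ with $q\geq 3$ (zero). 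Therefore $H^4(X',\Q)\cong E_\infty^{4,0}\cong H^4(X,\Q)$, and combining with the first part gives $h^2(X')=h^4(X')=h^4(X)$.

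I do not anticipate a serious obstacle; the argument is essentially bookkeeping once the spectral sequence is in place, and the only place where smallness is genuinely used is the vanishing and finite support of $R^q\pi_*\Q$ for $q\geq 1$. The convenient feature of degree four is precisely that the exceptional contributions live in $E_2^{0,1}$ and $E_2^{0,2}$, so they can only perturb $H^1$, $H^2$ and, through $d_2$ and $d_3$, the images of $H^2(X)$ and $H^3(X)$, but can never reach $H^4$.
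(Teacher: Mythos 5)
Your proposal is correct, but its second half takes a genuinely different route from the paper. The first equality $h^2(X')=h^4(X')$ is handled identically in both: Poincar\'e duality on the compact complex (in general non-K\"ahler) manifold $X'$, using only compactness and smoothness. For $h^4(X')=h^4(X)$, however, the paper does not use the Leray spectral sequence of $\pi\colon X'\to X$; it instead invokes the Mayer--Vietoris sequence of the resolution square, i.e.\ the triangle $H^\bullet(X)\to H^\bullet(X')\oplus H^\bullet(\Delta)\to H^\bullet(E)$ with $\Delta=X_{\sing}$ and $E$ the exceptional locus, and concludes from $h^3(E)=h^4(E)=0$ (since $\dim E\leq 1$) and the finiteness of $\Delta$ that the pullback $H^4(X)\to H^4(X')$ is an isomorphism. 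The geometric input is the same in both arguments --- smallness plus isolated singularities force the exceptional fibres to be at most curves over finitely many points --- and both arguments in fact produce the same conclusion that the natural map $\pi^*$ is an isomorphism, not merely an equality of dimensions (in your setup this is the edge homomorphism $H^4(X,\Q)\to E_2^{4,0}=E_\infty^{4,0}$). What each approach buys: the paper's descent argument is a two-line diagram chase, but it rests on citing the exactness of the Mayer--Vietoris sequence of a blow-up square for singular spaces; your Leray computation is longer bookkeeping (proper base change, identifying $R^q\pi_*\Q$ as skyscrapers for $q\geq 1$, killing the differentials into $E_r^{4,0}$, and the careful treatment of $R^0\pi_*\Q$ versus $\Q_X$ without assuming connected fibres), but it is self-contained modulo the standard proper base change theorem and makes explicit exactly where smallness enters.
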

\begin{proof}
Consider the Mayer--Vietoris sequence of the square, i.e. the triangle $H^\bullet(X)\to H^{\bullet}(X')\oplus H^{\bullet}(\Delta) \to H^{\bullet}(E)$, where $E$ is the exceptional locus and $\Delta$ the singular locus of $X$ \cite[?]{PSbook}. Since $E$ is one-dimensional we obtain  $h^3(E)=h^4(E)=0$ and therefore that $H^4(X)\to H^4(X')$ is an isomorphism. 
Using Poincar\'e duality we get $h^2(X')=h^4(X')$.
\end{proof}
If $X\subset \Ps^N$ is smooth then the discriminant of $X$ (which is also the dual variety of $X$) is an irreducible variety. The corresponding result for the case of singular threefolds is as follows:

\begin{lemma}\label{lemDis} Let $X\subset \Ps^N$ be a projective threefold with isolated singularities. Then the discriminant $\Delta$ of $X$ in $(\Ps^N)^*$ is the union of $\#X_{\sing}$ hyperplanes together with one irreducible component $\Delta^0$.
\end{lemma}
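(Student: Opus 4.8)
The plan is to stratify the discriminant $\Delta$ according to \emph{where} a hyperplane section $X_H = X\cap H$ acquires its singularity: either at one of the finitely many singular points of $X$, or at a smooth point. Write $X_{\sing}=\{p_1,\dots,p_s\}$ with $s=\#X_{\sing}$, let $U = X\setminus X_{\sing}$ be the (irreducible) smooth locus, and — after replacing $\Ps^N$ by the linear span of $X$ if necessary — assume $X$ is nondegenerate, so that no hyperplane contains $X$ and every section is a surface. First I would handle the singular points. For a hyperplane $H$ through $p_i$ I would use the identity of Zariski tangent spaces $T_{p_i}(X\cap H)=T_{p_i}X\cap T_{p_i}H$ together with $\dim T_{p_i}X\geq 4$ (since $p_i$ is a singular point of the threefold $X$) and $\dim T_{p_i}H = N-1$, to conclude
\[
\dim T_{p_i}(X_H)\;\geq\; 4+(N-1)-N \;=\;3\;>\;2\;=\;\dim X_H .
\]
Hence $X_H$ is singular at $p_i$ for \emph{every} $H$ through $p_i$, so the dual hyperplane $H_{p_i}^\ast:=\{H\in(\Ps^N)^\ast : p_i\in H\}$ (indeed a hyperplane, by biduality) is contained in $\Delta$ for each $i$.

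Next I would treat the smooth locus. For $H$ disjoint from $X_{\sing}$ the section $X_H$ lies inside $U$, and it is singular exactly when $H$ contains the embedded tangent space $T_xX\cong\Ps^3$ for some $x\in U\cap H$. Let $\Delta^0$ be the closure of the set of such tangent hyperplanes. Since any singular $X_H$ is singular at a point that lies either in $X_{\sing}$ (so $H\in H_{p_i}^\ast$ for some $i$) or in $U$ (so $H\in\Delta^0$), and conversely every such $H$ lies in $\Delta$, we obtain the set-theoretic decomposition
\[
\Delta \;=\; \Delta^0\,\cup\,\bigcup_{i=1}^{s}H_{p_i}^\ast .
\]

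The real content is the irreducibility of $\Delta^0$, which I would get from the standard conormal-variety argument. Consider the incidence variety
\[
\mathcal{C}\;=\;\overline{\{(x,H)\in U\times(\Ps^N)^\ast \;:\; T_xX\subseteq H\}}.
\]
Over $U$ the first projection exhibits $\mathcal{C}$ as a projective bundle: the fibre over $x\in U$ is the linear system of hyperplanes containing the $3$-plane $T_xX$, a $\Ps^{N-4}$ varying algebraically with $x$ (the projectivized conormal bundle). As $U$ is irreducible this bundle is irreducible, hence so is its closure $\mathcal{C}$, and therefore so is $\Delta^0=\overline{p_2(\mathcal{C})}$, being the image of an irreducible variety under the second projection $p_2$. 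Note that no spurious component can hide in the boundary $\mathcal{C}\setminus(\mathcal{C}|_U)$, since $\mathcal{C}$ is by construction the closure of the irreducible bundle. Finally $\Delta\neq(\Ps^N)^\ast$ (so $\Delta^0$ is a proper subvariety) follows from Bertini: a general $H$ meets $X$ transversally along $U$ and avoids $X_{\sing}$.

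I expect the genuine obstacle to be this last step — verifying carefully that over $U$ the incidence variety is truly a projective bundle, so that irreducibility of $U$ propagates to $\mathcal{C}$ and then to $\Delta^0$. The tangent-space estimate at the $p_i$ is elementary linear algebra, and the identification $\Delta=\Delta^0\cup\bigcup_i H_{p_i}^\ast$ is bookkeeping once one knows that a singular section is singular at some point of $X=U\sqcup X_{\sing}$; the irreducibility of the principal component $\Delta^0$ is where the substance lies.
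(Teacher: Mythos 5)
Your proof is correct and takes essentially the same route as the paper: the paper likewise observes that every hyperplane through a singular point lies in $\Delta$ (giving the $\#X_{\sing}$ dual hyperplanes), and obtains the remaining component as the image of the incidence variety $Z=\{(x,H)\mid x\in X\setminus X_{\sing},\ x\in (X_H)_{\sing}\}$, which is a $\Ps^{N-4}$-bundle over the irreducible smooth locus and hence irreducible. Your extra details (the Zariski tangent-space estimate at the $p_i$, the closure bookkeeping, and $\Delta\neq(\Ps^N)^*$ via Bertini) are points the paper leaves implicit.
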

\begin{proof}
 Let $p$ be a singular point of $X$. Then each hyperplane through $p$ is contained in the discriminant. Since these hyperplanes form a hyperplane in $(\Ps^N)^*$ they form an irreducible component $\Delta_p$ of $\Delta$.
 
 Consider next the set 
 \[ Z:=\{(x,H)\mid x\in X\setminus X_{\sing} \mbox{ and } x\in (X_H)_{\sing}\}\]
 Then the projection $Z\to X$  is a $\Ps^{N-4}$-bundle. In particular, $Z$ is irreducible. The projection of $Z$ to the second factor is $\Delta\setminus \cup_{p\in X_{\sing}} \Delta_p$. Hence there is precisely one irreducible component of $\Delta$ which is not contained in $\cup_{p\in X_{\sing}} \Delta_p$.
 \end{proof} 
We will use this result to construct  a Lefschetz pencil on $X'$:

Fix now a line $\ell \in (\Ps^N)^*$ such that $\ell$ intersects $\Delta$ transversally in its smooth locus. Moreover, if $\dim \Delta^0<N-1$ then $\ell$ does not intersect $\Delta^0$. In particular, $\ell$ avoids any intersection point of two irreducible components of $\Delta$. Now $\ell$ defines a one-parameter family of hyperplane sections $X_t$ of $X$. 
Consider now the pull-back $X'_t$ of $X_t$ to $X'$. 

\begin{lemma} We have that $X'_t$ is smooth if and only if $t\not \in \Delta^0\cap \ell$.
\end{lemma}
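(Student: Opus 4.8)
The plan is to separate $X'$ into the open set $U'=\pi^{-1}(X\setminus X_{\sing})$, on which the small resolution $\pi\colon X'\to X$ is an isomorphism, and the exceptional curve $E=\pi^{-1}(X_{\sing})=\bigcup_{p\in X_{\sing}}C_p$, and to locate the singular points of $X'_t$ on each stratum. Throughout I use the decomposition $\Delta=\Delta^0\cup\bigcup_p\Delta_p$ of Lemma \ref{lemDis} and the incidence variety $Z$ from its proof, whose image in $(\Ps^N)^*$ is $\Delta\setminus\bigcup_p\Delta_p$. Recall that $t\in\Delta_p$ means exactly that $H_t$ passes through the singular point $p$, so that $C_p\subset X'_t$, while a point of the image of $Z$ records a smooth point of $X$ at which $H_t$ is tangent.

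On $U'$ the isomorphism $\pi$ identifies $X'_t$ with $X_t\cap(X\setminus X_{\sing})$, whose singular points are exactly the smooth points $x$ of $X$ with $(x,H_t)\in Z$. Hence $X'_t$ is singular at some point of $U'$ if and only if $H_t$ lies in the image of $Z$, which is $\Delta\setminus\bigcup_p\Delta_p$; for $t\in\ell$, and using that $\ell$ avoids every intersection of two components, this says precisely $t\in\Delta^0\cap\ell$. In particular $t\in\Delta^0\cap\ell$ already forces $X'_t$ to be singular, and for $t\notin\Delta^0$ the divisor $X'_t$ is smooth all along $U'$. It remains to examine $X'_t$ along $E$. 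If $t\notin\bigcup_p\Delta_p$ then no singular point of $X$ lies on $X_t$, so $X'_t\cap E=\emptyset$ and there is nothing to check. By the choice of $\ell$ the only other case is that $t$ is the single point $\ell\cap\Delta_p$ for one $p$, in which case $X'_t\cap E=C_p$ and $t\notin\Delta^0$; the smoothness of $X'_t$ along $C_p$ is what still must be proved.

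This last step is the main obstacle, and I would settle it by a local computation at $p$. Writing $X$ near an ordinary double point as $xy-zw=0$ and taking the small resolution $z=xb,\ y=wb$ in the chart where $C_p=\{x=w=0\}$, the pullback of a hyperplane with linear part $\alpha x+\beta y+\gamma z+\delta w$ through $p$ becomes $x(\alpha+\gamma b)+w(\delta+\beta b)=0$, which is singular along $C_p$ exactly where $\alpha+\gamma b=\delta+\beta b=0$; such a $b$ exists if and only if $\alpha\beta=\gamma\delta$. Thus $B_p:=\{t\in\Delta_p:X'_t\text{ is singular along }C_p\}$ is a proper subvariety of $\Delta_p$, and the crucial point is that $B_p\subseteq\Delta^0$: when $\alpha\beta=\gamma\delta$ the local quadric $X_t$ degenerates near $p$ into two planes, so $X_t$ is singular along a whole curve of smooth points of $X$, i.e. $H_t$ is tangent to $X$ at smooth points and therefore $t\in\Delta^0$. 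Hence $B_p\subseteq\Delta^0\cap\Delta_p$, an intersection of two distinct irreducible components of $\Delta$, which $\ell$ avoids by construction; consequently $\ell\cap\Delta_p\notin B_p$ and $X'_{\ell\cap\Delta_p}$ is smooth along $C_p$. The same conormal-bundle computation applies to an arbitrary isolated singularity admitting a small resolution — $X'_t$ is singular along $C_p$ precisely when the induced section of $N^{\vee}_{C_p/X'}\otimes\pi^*\cO_X(1)|_{C_p}$ acquires a zero — and the heart of the matter in each case is the verification that the resulting locus $B_p$ is again contained in $\Delta^0\cap\Delta_p$. Combining the two strata, $X'_t$ is singular if and only if it is singular on $U'$, i.e. if and only if $t\in\Delta^0\cap\ell$, as claimed.
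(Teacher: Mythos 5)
Your overall architecture is sound and runs parallel to the paper's: you stratify $X'$ into $\pi^{-1}(X\setminus X_{\sing})$ and the exceptional curves, settle the first stratum via the incidence variety $Z$, and reduce the lemma to smoothness of $X'_t$ along $C_p$ for the single parameter $t=\ell\cap\Delta_p$; the paper makes the same case division but settles the second case downstairs, asserting that $X_t$ is smooth away from $p$ and that $X'_t\to X_t$ is the blow-up of $p$ in $X_t$, hence smooth. The genuine gap is your justification of the crucial containment $B_p\subseteq\Delta^0$. You argue: if $\alpha\beta=\gamma\delta$ then ``the local quadric $X_t$ degenerates near $p$ into two planes, so $X_t$ is singular along a whole curve of smooth points of $X$'', hence $t$ lies in the image of $Z$. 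This is false. The identification of $X$ with $\{xy-zw=0\}$ near $p$ is only analytic, and in those coordinates the hyperplane is its linear part plus higher-order terms; what degenerates into two planes is the tangent cone of $X_t$ at $p$, not $X_t$ itself. Typically such an $X_t$ has an isolated $A_2$-singularity at $p$ and is smooth at all nearby smooth points of $X$. Concretely, take $X=\{xy-zw+y^3=0\}$, which has an ordinary double point at the origin, and the hyperplane $\{x+w=0\}$, so that $\alpha\beta=\gamma\delta=0$: substituting $w=-x$ gives $X_t=\{x(y+z)+y^3=0\}$, which has an isolated $A_2$-singularity at the origin and no other singular point near it. So the implication ``$t\in B_p\Rightarrow t$ lies in the image of $Z$'', on which your containment rests, fails, and with it your conclusion that $\ell\cap\Delta_p\notin B_p$.

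The containment $B_p\subseteq\Delta^0$ is nevertheless true, and your proof can be repaired by replacing membership in the image of $Z$ with a limit argument: $\Delta^0$ is the \emph{closure} of the image of $Z$, hence closed, and $B_p$ is precisely the set of limits of tangent hyperplanes $T_xX$ as $x\to p$ through smooth points. Indeed, writing the local equation of $X$ as $xy-zw+(\mbox{higher order})$, its gradient is $(y,x,-w,-z)$ up to higher-order corrections, so along a curve in $X$ approaching $p$ with tangent direction $(a_1,a_2,a_3,a_4)$ (necessarily satisfying $a_1a_2=a_3a_4$) the tangent hyperplanes converge to $\{a_2x+a_1y-a_4z-a_3w=0\}$; as the direction varies over the tangent cone these limits sweep out exactly the dual quadric $\{\alpha\beta=\gamma\delta\}=B_p$, which is therefore contained in $\Delta^0\cap\Delta_p$, the set avoided by $\ell$. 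With this fix your argument is complete for ordinary double points. There remains the smaller issue that the lemma concerns arbitrary isolated singularities admitting small resolutions, and for those you only formulate, but, as you yourself acknowledge, do not verify the analogous containment; the paper disposes of the general case (tersely) by the blow-up assertion quoted above.
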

\begin{proof}
Suppose $t$ is such that  $X_t$ is contained in the smooth locus of $X$ then $X_t\cong X'_t$ and the statement is trivial

Suppose now that $t$ is such that  $X_t$ contains at least one of the singular points of $X$. Then  $t\in\Delta_p$ for some $p\in X_{\sing}$ and $p$ is a singular point of $X_t$. Since $\ell$ intersects $\Delta$ in its smooth locus we have  that $t\not \in \Delta_q$, for $q\neq p$ and that $t\not \in \Delta^0$. Hence $X_t$ is smooth away from $p$. The map $X'_t\to X_t$ is the blow-up of $p$ in $X_t$ and therefore $X'_t$ is smooth.
\end{proof}

\begin{lemma}
 The pencil $X'_t$ is a Lefschetz pencil.
\end{lemma}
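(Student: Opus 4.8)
The plan is to check the two defining properties of a Lefschetz pencil for the family $\{X'_t\}_{t\in\ell}$: that the axis of the pencil pulls back to a smooth subvariety of $X'$, and that every singular fibre has a single ordinary double point as its only singularity. By the previous lemma, $X'_t$ is singular precisely for $t\in\Delta^0\cap\ell$, so all of the work is concentrated over these finitely many points. The first thing I would do is reduce the whole analysis to the smooth locus $U:=X\setminus X_{\sing}$. For $t\in\Delta^0\cap\ell$ the hypotheses on $\ell$ (it meets $\Delta$ only in its smooth locus and avoids every intersection of two components) force $t\notin\Delta_p$ for all $p\in X_{\sing}$; hence $H_t$ misses $X_{\sing}$, the section $X_t$ is contained in $U$, its preimage $X'_t$ is disjoint from the exceptional locus $E$, and $X'\to X$ restricts to an isomorphism $X'_t\cong X_t$. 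Thus each singular fibre lives in the open set $U'\cong U$ where the resolution is an isomorphism, and we are reduced to a statement about hyperplane sections of the smooth quasi-projective threefold $U$, with the singularities of $X$ playing no role.

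For the axis, the line $\ell$ corresponds to a codimension-two linear subspace $L\subset\Ps^N$, and the base locus of the pencil on $X$ is $X\cap L$. For general $\ell$ this $L$ avoids the finite set $X_{\sing}$ and meets $U$ transversally, so by Bertini $X\cap L$ is a smooth curve lying in $U$; its preimage in $X'$ is then disjoint from $E$ and maps isomorphically onto it, hence is smooth. The heart of the matter is the local structure of the singular fibres. Here I would use the description of $\Delta^0$ coming from the proof of Lemma \ref{lemDis}: $\Delta^0$ is the closure of the image of $Z=\{(x,H)\mid x\in U,\ x\in (X_H)_{\sing}\}$ under projection to $(\Ps^N)^*$, i.e.\ it is the dual variety of $U$. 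Over a general point $[H]$ of the smooth locus of $\Delta^0$ the fibre of $Z\to\Delta^0$ is a single reduced point, the contact point $x$, and at $x$ the hyperplane section $X_H$ has a nondegenerate quadratic tangent cone, that is an ordinary double point. Since $\ell$ meets $\Delta^0$ transversally in its smooth locus, each $t\in\Delta^0\cap\ell$ yields a fibre with exactly one singular point, a node; and transversality of $\ell$ to $\Delta^0$ is exactly the condition that the pencil deforms this node nondegenerately, so that after blowing up the (smooth) axis the total space is smooth and the corresponding critical point is of Morse type. Together these give that $\{X'_t\}$ is a Lefschetz pencil.

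I expect the main obstacle to be this last step: establishing that the generic member of $\Delta^0$ is tangent to $X$ at a single smooth point and with a nondegenerate Hessian, rather than along a positive-dimensional contact locus or with a degenerate quadratic part. This is precisely the classical genericity-of-simple-tangency analysis underlying Lefschetz-pencil theory for smooth varieties (via reflexivity of the dual variety), and by the reduction of the first step it may be carried out entirely on the smooth threefold $U$, so that the singularities of $X$ cause no difficulty.
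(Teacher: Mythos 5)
Your proposal is correct and follows essentially the same route as the paper: the conditions on $\ell$ force every singular fibre to avoid $X_{\sing}$ (so $X'_t\cong X_t$ lies in the smooth locus and the small resolution is irrelevant there), after which the classical dual-variety/simple-tangency analysis --- which the paper invokes by citing \cite[Lemma 2.7, Corollary 2.8]{Voi2} --- gives that each singular fibre has exactly one ordinary double point. Your write-up is just more explicit about the reduction, the smoothness of the axis, and the reflexivity argument behind the cited lemmas.
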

\begin{proof}
 If $X'_t$ is singular then $t$ is smooth point of $\Delta^0$. This implies that $\ell$ intersects $\Delta^0$ and since $X_t$ is a Lefschetz pencil we obtain that $\dim \Delta^0=N-1$. Then the same reasoning as in \cite[Lemma 2.7, Corollary 2.8]{Voi2} yields that $X'_t$ has one singular point and that this point is a node. Therefore $X'_t$ is a Lefschetz pencil.
\end{proof}

 Let $\tilde{X}$ be a resolution of the map $X'\dashrightarrow \Ps^1$ induced by this pencil, i.e., the blow-up of the base locus of the pencil on $X'$. Let $U\subset \Ps^1$ be the locus of points with smooth fibers in $\tilde{X}$. Without loss of generality we may assume that $0,\infty\in U$ and that $X_0\cong X_0'$ and $X_{\infty}\cong X'_{\infty}$.
 
 \begin{lemma}\label{vangen}
Let $i_0:X_0\to \tilde{X}\setminus X_{\infty}$ be the inclusion. Then
\[ i_{0*}:H_k(X_0,\Z)\to H_k(\tilde{X}\setminus X_{\infty},\Z)\]
is an isomorphism for $k\leq 1$. The map $i_{0*}$ is surjective for $k=2$ with kernel  generated by the vanishing cycles. 
\end{lemma}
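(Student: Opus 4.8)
The plan is to recognize the space in question as the total space of the Lefschetz fibration restricted over an affine line. Write $f\colon\tilde X\to\Ps^1$ for the morphism induced by the pencil and set $\C=\Ps^1\setminus\{\infty\}$, so that $\tilde X\setminus X_\infty=f^{-1}(\C)$ and $X_0=f^{-1}(0)$. Since $X'$ is smooth (being a small resolution) and $\tilde X$ is the blow-up of the smooth base locus of the pencil, the total space $\tilde X$ is smooth; by the preceding lemma $f$ is a Lefschetz fibration whose fibers have complex dimension $n=2$. Let $t_1,\dots,t_r\in\C$ be the critical values, each carrying a single node. The whole statement will then follow from the standard computation of the relative homology $H_\bullet(f^{-1}(\C),X_0)$.

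The key step is to show that, up to homotopy, $f^{-1}(\C)$ is obtained from the fiber $X_0$ by attaching one $(n+1)$-cell for each critical point; equivalently
\[ H_k\bigl(f^{-1}(\C),X_0;\Z\bigr)=0 \text{ for } k\neq n+1, \qquad H_{n+1}\bigl(f^{-1}(\C),X_0;\Z\bigr)\cong\Z^r. \]
I would establish this exactly as in \cite{Voi2}: choose small disjoint disks $D_i$ around the $t_i$ and a system of disjoint paths $\gamma_i$ from $0$ to $\partial D_i$; then $\C$ deformation retracts onto the contractible ``star'' $\Gamma=\bigcup_i(\gamma_i\cup D_i)$, and by Ehresmann's theorem this retraction lifts to a deformation retraction of $f^{-1}(\C)$ onto $f^{-1}(\Gamma)$. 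Over each $\gamma_i$ the fibration is trivial, so $f^{-1}(\Gamma)$ retracts onto $X_0$ with the local pieces $f^{-1}(D_i)$ attached, and the local Lefschetz model around a node shows that each $f^{-1}(D_i)$ retracts onto a single fiber together with a Lefschetz thimble, an $(n+1)$-cell whose boundary is the vanishing cycle $\delta_i\in H_n(X_0)$ (transported along $\gamma_i$). Collecting these pieces yields the relative homology above, with the generators of $H_{n+1}$ the thimble classes and the connecting homomorphism sending each thimble to its $\delta_i$.

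Finally I would feed this into the long exact sequence of the pair $(f^{-1}(\C),X_0)$,
\[ \cdots\to H_{k+1}\bigl(f^{-1}(\C),X_0\bigr)\to H_k(X_0)\xrightarrow{i_{0*}} H_k\bigl(f^{-1}(\C)\bigr)\to H_k\bigl(f^{-1}(\C),X_0\bigr)\to\cdots. \]
For $k\leq n-1=1$ both relative groups vanish, so $i_{0*}$ is an isomorphism. For $k=n=2$ the group $H_n(f^{-1}(\C),X_0)$ vanishes while $H_{n+1}(f^{-1}(\C),X_0)\cong\Z^r$, so $i_{0*}$ is surjective with kernel the image of the thimble classes, i.e.\ the subgroup generated by the vanishing cycles $\delta_1,\dots,\delta_r$. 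This is precisely the asserted statement.

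Because $\tilde X$ is smooth and $f$ is a genuine Lefschetz fibration over $\C$, the argument is entirely standard Lefschetz theory; the only point requiring care is to confirm that the geometry constructed above—in particular the smoothness of $\tilde X$ and the absence of critical values at $\infty$ (guaranteed by $\infty\in U$)—really places us in the setting where the local Lefschetz model and Ehresmann's theorem apply. Once that is verified the topological conclusion is classical.
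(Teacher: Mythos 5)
Your proof is correct and follows essentially the same route as the paper: the paper simply cites Voisin's Corollary 2.20 and notes that its proof (Ehresmann's theorem plus Morse theory, showing $\tilde X\setminus X_\infty$ has the homotopy type of $X_0$ with $3$-cells glued along the vanishing cycles, followed by excision and the local statement) is purely topological and hence carries over to the non-K\"ahler resolution $X'$. You have written out precisely that argument---retraction onto the star of paths and disks, local Lefschetz thimble models, and the long exact sequence of the pair---so the two proofs coincide in substance.
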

\begin{proof}
 If $X=X'$ (i.e., $X$ is smooth) then this follows from  \cite[Corollary 2.20]{Voi2}. 
  If $X$ is singular then $X'$ is not  K\"ahler, and therefore we cannot directly apply \cite[Corollary 2.20]{Voi2}. However, the proof can be extended to the  non-K\"ahler case.  Voisin shows first that $\tilde{X}\setminus X_{\infty}$ has the homotopy type of the union of $X_0$  with 3-dimensional balls glued along 2-dimensional balls. The proof of this uses several times Ehresmann's theorem and Morse theory and carries over to our case. Then the claim  follows from excision and the local version of this claim \cite[Corollary 2.17]{Voi2}.
 \end{proof}

\begin{lemma} Let $j:X_0\to X'$ be the inclusion. The kernel of $j_*:H^2(X_0,\Q)\to H^4(X',\Q)$ is generated by the vanishing cycles. 
\end{lemma}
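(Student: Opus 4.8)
The plan is to translate the statement into homology and then to route the computation through the projective geometry of a single smooth fibre, so as to avoid any appeal to the Hodge theory of the non-K\"ahler spaces $X'$ and $\tilde X$. Write $\pi\colon\tilde X\to X'$ for the blow-down, $U=\tilde X\setminus X_\infty$, and let $r\colon U\hookrightarrow\tilde X$ and $\tilde j\colon X_0\cong\tilde X_0\hookrightarrow\tilde X$ be the inclusions, so that $\tilde j=r\circ i_0$ and $j=\pi\circ\tilde j$. By Poincar\'e duality on the smooth surface $X_0$ and on the compact oriented $6$-manifolds $X'$ and $\tilde X$, the Gysin map $j_*\colon H^2(X_0,\Q)\to H^4(X',\Q)$ is dual to the homology pushforward $H_2(X_0,\Q)\to H_2(X',\Q)$, and it suffices to identify the kernel of the latter. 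First I would show this kernel equals $\ker(\tilde j_*\colon H_2(X_0,\Q)\to H_2(\tilde X,\Q))$: the relation $j_*=\pi_*\tilde j_*$ gives $\ker\tilde j_*\subseteq\ker j_*$, and for the reverse inclusion I use that $\ker\pi_*$ is spanned by the class $[\ell_E]$ of a fibre of the exceptional $\Ps^1$-bundle $E\to B$ over the base locus $B$, whereas every class in $\operatorname{im}\tilde j_*$ is supported on the fibre $\tilde X_0$, which is disjoint from $X_\infty$. Since $[\ell_E]\cdot[X_\infty]=1$ while $\tilde j_*(\alpha)\cdot[X_\infty]=0$, a class lying in $\operatorname{im}\tilde j_*\cap\ker\pi_*$ must vanish.

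By Lemma \ref{vangen} the map $i_{0*}\colon H_2(X_0,\Q)\to H_2(U,\Q)$ is surjective with kernel the span $V$ of the vanishing cycles. Since $\tilde j_*=r_*\circ i_{0*}$, the span $V$ lies in $\ker\tilde j_*$, and as $i_{0*}$ is surjective with kernel $V\subseteq\ker\tilde j_*$ we obtain $\ker\tilde j_*=V$ if and only if $r_*\colon H_2(U,\Q)\to H_2(\tilde X,\Q)$ is injective. I would deduce this injectivity from the Gysin long exact sequence of the smooth divisor $X_\infty\subset\tilde X$ with complement $U$,
\[ H_3(\tilde X,\Q)\xrightarrow{\ \cdot[X_\infty]\ } H_1(X_\infty,\Q)\xrightarrow{\ c\ } H_2(U,\Q)\xrightarrow{\ r_*\ } H_2(\tilde X,\Q), \]
in which $\ker r_*=\operatorname{im} c$. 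Thus $r_*$ is injective precisely when the intersection map $\cdot[X_\infty]\colon H_3(\tilde X,\Q)\to H_1(X_\infty,\Q)$ is surjective, which is the statement that remains.

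The surjectivity of $\cdot[X_\infty]$ is the main obstacle, and it is here that the projectivity of the fibre replaces the hard Lefschetz theorem on $\tilde X$, which is unavailable since $\tilde X$ is not K\"ahler. For a $1$-cycle $\gamma$ on the base locus $B$, the restriction $E|_\gamma$ of the exceptional $\Ps^1$-bundle over $\gamma$ is a $3$-cycle in $\tilde X$; since $E\cap X_\infty$ is a section of $E\to B$ which $\pi$ identifies with the curve $B\subset X_\infty$, a transversality computation gives $[E|_\gamma]\cdot[X_\infty]=\iota_*[\gamma]$, where $\iota\colon B\hookrightarrow X_\infty$ is the inclusion. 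Hence the image of $\cdot[X_\infty]$ contains $\iota_*H_1(B,\Q)$. Now $X_\infty$ is a \emph{smooth projective} surface, namely a hyperplane section of $X$ contained in its smooth locus, and $B=X_0\cap X_\infty$ is an ample divisor on it, so the classical Lefschetz hyperplane theorem applied to the pair $(X_\infty,B)$ shows that $\iota_*\colon H_1(B,\Q)\to H_1(X_\infty,\Q)$ is surjective. Therefore $\cdot[X_\infty]$ is surjective, $r_*$ is injective, and $\ker j_*=\ker\tilde j_*=V$, as claimed. I expect the only delicate points to be the orientation bookkeeping in the Gysin sequence and the verification that $X_\infty$ and its hyperplane section $B$ are smooth, both of which are guaranteed by the genericity already imposed on the Lefschetz pencil; note that, unlike the later decomposition result, this argument never uses the hypothesis $h^4(X)=1$.
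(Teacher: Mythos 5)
Your proof is correct, and while its skeleton necessarily coincides with the paper's (pass to homology by Poincar\'e duality, factor $j_*$ as $H_2(X_0)\to H_2(U)\to H_2(\tilde X)\to H_2(X')$ with $U=\tilde X\setminus X_\infty$, and quote Lemma~\ref{vangen} for the kernel of the first map), you handle the two remaining steps quite differently from the paper. The paper invokes the blow-up decomposition $H_2(\tilde X)=H_2(X')\oplus H_0(B)$, asserts without further argument that the projection onto $H_2(X')$ is injective on the image of $H_2(X_0)$, and disposes of the injectivity of $H_2(U)\to H_2(\tilde X)$ by saying that ``the same argument as in Voisin, Cor.~2.23'' applies. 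Your intersection-with-$[X_\infty]$ argument (the fibre class $[\ell_E]$ meets $X_\infty$ once, while classes pushed forward from $\tilde X_0$ are supported away from $X_\infty$) is exactly what is needed to justify the paper's asserted projection step; and your proof that $r_*$ is injective --- the Gysin sequence reducing it to surjectivity of the intersection map $H_3(\tilde X)\to H_1(X_\infty)$, then the explicit $3$-cycles $E|_\gamma$ over loops $\gamma\subset B$ combined with the Lefschetz hyperplane theorem for the pair $(X_\infty,B)$ --- is genuinely different from Voisin's, who obtains the corresponding surjectivity from hard Lefschetz. Your route is particularly well adapted to this paper's setting: all projective/Hodge-theoretic input is applied only to $X_\infty$ and $B$, which are honest smooth projective varieties, and never to the non-K\"ahler spaces $X'$ and $\tilde X$ --- precisely the difficulty the paper must circumvent (Voisin's argument does survive, since her hard Lefschetz is applied on $X_\infty$, but the paper leaves this verification to the reader). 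Two small points you should make explicit: $\ker\pi_*$ is spanned by a \emph{single} fibre class only because $B$ is connected --- if $B$ had several components, intersecting with $[X_\infty]$ would only detect the sum of the coefficients --- but connectedness does hold here, since $B$ is a smooth ample divisor on the connected smooth projective surface $X_\infty$ (Kodaira vanishing, or the same Lefschetz theorem you already invoke); and the identity $[E|_\gamma]\cdot[X_\infty]=\pm\iota_*[\gamma]$ deserves the transversality justification you sketch, though the sign is immaterial for surjectivity.
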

\begin{proof}
Let $B$ be the base locus of the Lefschetz pencil $X'_t$. Then $H^4(\tilde{X})=H^4(X')\oplus H^2(B)$ and therefore $H_2(\tilde{X})=H_2(X')\oplus H_0(B)$. 

Note that the morphism  $j_*:H^2(X_0)\to H^4(X')$ is the Poincar\'e dual of $j_*:H_2(X_0)\to H_2(X')$. We can obtain this map by compsing $H_2(X_0)\to H_2(\tilde{X}\setminus X_{\infty})$ with the map $H_2(\tilde{X}\setminus X_{\infty}) \to H_2(\tilde{X})=H_2(X')\oplus H_0(B)$ and then projecting to the first factor.

The kernel of the first map is generated by the vanishing cycles by Lem\-ma~\ref{vangen}. The second map is injective by the same argument as in \cite[Corollary 2.23]{Voi2} and the map $H_2(X')\oplus H_0(B)\to H_2(X')$ is injective when restricted to the image of $H_2(X_0)$.
\end{proof}

\begin{lemma}All vanishing cycles are conjugated under the monodromy action.
 \end{lemma}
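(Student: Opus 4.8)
The plan is to reduce this statement to the irreducibility of the component $\Delta^0$ of the discriminant established in Lemma~\ref{lemDis}, and then to run the standard transitivity argument from Lefschetz theory. By construction the line $\ell$ meets the discriminant $\Delta$ only along $\Delta^0$, since it avoids all the hyperplane components $\Delta_p$. Hence every critical value of the pencil $X'_t$ corresponds to a point of $\ell\cap\Delta^0$, and the monodromy group of the pencil is generated by the Picard--Lefschetz transformations $T_{\delta_i}$ attached to these critical values. It therefore suffices to show that any two of the vanishing cycles $\delta_i,\delta_j\in H_2(X_0,\Z)$ lie in a single orbit of this group, up to sign.

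First I would set up the local system of vanishing cycles over the smooth locus $\Delta^{0,\mathrm{sm}}$ of $\Delta^0$. Over a point $t\in\Delta^{0,\mathrm{sm}}$ the fiber $X'_t$ acquires a single node, and the associated vanishing cycle is well-defined up to sign once a path to the base fiber is chosen; as $t$ varies these cycles assemble into a local system on $\Delta^{0,\mathrm{sm}}$. Because $\Delta^0$ is irreducible, $\Delta^{0,\mathrm{sm}}$ is connected, which is precisely the input that drives transitivity. To connect two critical values $t_i,t_j$ of our fixed pencil $\ell$, I would choose a path $\gamma$ in $\Delta^{0,\mathrm{sm}}$ joining them; deforming the pencil so as to follow $\gamma$ carries $\delta_i$ to $\pm\delta_j$ and realizes the desired conjugation as an element of the pencil monodromy. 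This is exactly the reasoning of \cite[Section~3.2]{Voi2}, with the role of the dual variety of a smooth projective variety played here by the single irreducible component $\Delta^0$.

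The main obstacle is the formal verification that the argument of \cite{Voi2}, stated for the dual variety of a smooth projective variety, applies once one substitutes $\Delta^0$ for that dual variety. Two points require care in our singular setting. The hyperplane components $\Delta_p$ must be shown to play no role; this is immediate because $\ell$ avoids them, so they contribute no critical values and no vanishing cycles to the pencil. More seriously, one must confirm that the Picard--Lefschetz formalism governing the nodes lying over $\Delta^0$ holds on the non-K\"ahler resolution $X'$ exactly as in the K\"ahler case. Since the Picard--Lefschetz transformation is a local statement in a neighbourhood of each node, I expect this to carry over verbatim, in the same spirit as the extension already performed in Lemma~\ref{vangen}; making this local-to-global passage precise is where I anticipate the bulk of the work.
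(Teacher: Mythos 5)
Your overall strategy coincides with the paper's: reduce everything to the irreducibility of $\Delta^0$ (Lemma~\ref{lemDis}) and then run the standard transitivity argument of \cite[Proposition 3.23]{Voi2}, observing that the Picard--Lefschetz formalism is local and therefore insensitive to $X'$ being non-K\"ahler. The paper's proof is exactly this, stated in two lines. However, your justification for why the hyperplane components $\Delta_p$ contribute no vanishing cycles contains a genuine error, and you use it twice: you claim that $\ell$ ``avoids'' the components $\Delta_p$. This is impossible. Each $\Delta_p$ is a hyperplane in $(\Ps^N)^*$, and no line in $(\Ps^N)^*$ can avoid a hyperplane: $\ell$ necessarily meets every $\Delta_p$, so the pencil always contains members passing through each singular point of $X$. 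If anything, the construction has it the other way around: $\ell$ is required to meet $\Delta$ only in smooth points of $\Delta$, and it is $\Delta^0$ that $\ell$ may fail to meet, namely when $\dim\Delta^0<N-1$ (in which case there are no vanishing cycles at all and the statement is vacuous).

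The correct reason the points of $\ell\cap\Delta_p$ produce no critical values --- and it is the crucial geometric point of the whole construction, not something ``immediate'' --- is the paper's unlabelled lemma asserting that $X'_t$ is smooth if and only if $t\notin\Delta^0\cap\ell$: for $t\in\ell\cap\Delta_p$, transversality of $\ell$ forces $t\notin\Delta_q$ for $q\neq p$ and $t\notin\Delta^0$, so $X_t$ is singular only at $p$, and its pull-back $X'_t$ to the small resolution is the blow-up of $p$ in $X_t$, which is smooth. In other words, the members of the pencil that are singular on $X$ merely because they pass through a singular point of $X$ become smooth members of the pencil on $X'$; the critical values of the pencil $X'_t$ are exactly the points of $\ell\cap\Delta^0$, each contributing a single node. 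Once you replace your avoidance claim by this lemma, the rest of your argument --- the local system of vanishing cycles over the smooth one-node locus of $\Delta^0$, connectedness of that locus by irreducibility, and the passage from paths in $\Delta^0$ to conjugation in the pencil monodromy --- is correct and is essentially the paper's proof, spelled out in more detail than the paper bothers to give.
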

\begin{proof} 
The proof of \cite[Proposition 3.23]{Voi2} extends to our case: The only non-trivial thing to check is the irreducibility of the locus of hyperplanes $H$ such that the pull-back of $X\cap H$ to $X'$ is singular. But this locus is precisely the irreducible component $\Delta^0$ (Lemma~\ref{lemDis}).
\end{proof}

\begin{definition} Let $X\subset \Ps^n$ be threefold with isolated singularities and $H$ a hypersurface. Let $Y=X\cap Y$ and $i:Y\hookrightarrow X$ be the inclusion.
 Denote with $H^k(Y)_{\van}$ the kernel of $i_*:H^k(Y)\to H^{k+2}(X)$.
\end{definition}

\begin{lemma} If $h^4(X)=1$ holds then 
 \[ H^2(Y) = H^2(Y)_{\van}\oplus j^* H^2(X'). \] 
\end{lemma}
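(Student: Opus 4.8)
The plan is to establish the direct sum decomposition $H^2(Y) = H^2(Y)_{\van} \oplus j^*H^2(X')$ by showing two things: that the two subspaces span $H^2(Y)$, and that they intersect trivially. The spanning statement should follow readily from the previous lemma, which identifies the kernel of $j_*\colon H^2(X_0) \to H^4(X')$ with the space generated by vanishing cycles. Since $Y = X_0$ in the relevant notation, $H^2(Y)_{\van}$ is exactly this kernel, so $j_*$ induces an injection $H^2(Y)/H^2(Y)_{\van} \hookrightarrow H^4(X')$. By Lemma~\ref{lembe} and the hypothesis $h^4(X)=1$ we have $h^4(X')=1$, so the quotient has rank at most one. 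Thus it suffices to exhibit one class in $H^2(Y)$ not killed by $j_*$, and the natural candidate is the restriction to $Y$ of a generator of $H^2(X')$ — equivalently a hyperplane-type class.

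The heart of the argument, and what I expect to be the main obstacle, is showing that $j^*H^2(X')$ maps isomorphically onto this rank-one quotient, i.e. that the composite $H^2(X') \xrightarrow{j^*} H^2(Y) \xrightarrow{j_*} H^4(X')$ is an isomorphism. Geometrically this composite is cup product with the fundamental class $[Y]$ of the hypersurface section inside $X'$. In the smooth Kähler setting one invokes the hard Lefschetz theorem to conclude that this cup product is an isomorphism, but as the introduction emphasizes, $X'$ is non-Kähler precisely because $X$ is singular with a small resolution, so hard Lefschetz is unavailable. My plan is therefore to argue directly: by Lemma~\ref{lembe} both $H^2(X')$ and $H^4(X')$ are one-dimensional, so it suffices to check that cup product with $[Y]$ is nonzero on a generator. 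This reduces to an intersection-theoretic computation, namely that $\int_{X'} \eta \cup [Y] \neq 0$ for $\eta$ generating $H^2(X')$; since $Y$ is an ample hypersurface section on the singular $X$ and $X \to X'$ is an isomorphism in codimension one (the exceptional locus is one-dimensional), the relevant intersection number can be computed on $X$ where ampleness of the hyperplane class guarantees positivity.

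Once the composite $j_* \circ j^*$ is known to be an isomorphism of one-dimensional spaces, the decomposition follows formally. First, $j^*H^2(X')$ is one-dimensional and meets $H^2(Y)_{\van} = \ker j_*$ trivially: any class in the intersection would be killed by $j_*$, yet $j_*$ is injective on $j^*H^2(X')$. Second, a dimension count closes the argument: we have $\dim j^*H^2(X') = 1 = \dim H^2(Y)/H^2(Y)_{\van}$, so the sum of the two subspaces is everything. Hence $H^2(Y) = H^2(Y)_{\van} \oplus j^*H^2(X')$ as desired. The only delicate point meriting care is the identification of the map $j_*\circ j^*$ with cup product by $[Y]$ and the verification that the non-Kähler obstruction genuinely does not prevent the intersection number from being nonzero — this is the ad hoc substitute for hard Lefschetz announced in the introduction.
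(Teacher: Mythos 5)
Your proposal is correct and follows essentially the same route as the paper: both reduce the decomposition to showing that the composite $j_*\circ j^*\colon H^2(X')\to H^4(X')$, identified with cup product by $[Y]$, is nonzero between one-dimensional spaces (using Lemma~\ref{lembe} and $h^4(X)=1$), and both verify this via a nonvanishing triple intersection number, $[Y]^3\neq 0$. In fact you supply slightly more detail than the paper on why that number is positive (pushing the computation down to the singular $X$ via the small resolution and ampleness there), whereas the paper simply asserts $[Y]^3\neq 0$; just note that your expression ``$\int_{X'}\eta\cup[Y]$'' should be the triple product $\int_{X'}\eta\cup[Y]\cup\eta$, since $\eta\cup[Y]$ lies in $H^4(X')$, not in top degree.
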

\begin{proof}
Note that we have $h^4(X')=h^2(X')=1$ by Lemma~\ref{lembe} and hence that $h^2(Y)_{\van}+h^2(X')=h^2(Y)$. Hence it suffices to prove  $H^2(Y)_{\van}\cap j^* H^2(X')=0$. So let $\beta\in H^2(X')$ be such that $j^*\beta\in H^2(Y)_{\van}$. In other words $j_*j^*\beta=0$. We claim now that $j_*j^*:H^2(X',\Q)\to H^4(X',\Q)$ is an isomorphism. Since both the domain and the target space are one-dimensional, it suffices to show that this map is non-zero. The map $j_*j^*$ is the cup product with $[Y]$. Since $[Y]^3$ is non-zero we have that $[Y]^2$ is also non-zero and therefore $j_*j^*([H])\neq 0$. Hence $j_*j^*$ is non-zero and therefore $\beta=0$.
\end{proof}
\begin{remark}
 In \cite{Voi2} the assumption $h^4(X)=1$ is not necessary. Voisin shows that $h^2(X')\to H^4(X')$ is an isomorphism by applying Hard Lefschetz. Since $X'$ is not K\"ahler we cannot apply this result directly.
\end{remark}

\begin{theorem}
 Let $X\subset \Ps^N$ be a threefold with isolated singularities, such that $h^4(X)=1$. Suppose that $X$ admits a (non-projective) small resolution. Then for a very general hypersurface $H$ of fixed degree $d\geq 1$ we have that either $\rho(X_H)=1$ holds or $X_H$ is a  surface with $p_g=0$. 
\end{theorem}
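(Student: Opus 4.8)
The plan is to assemble the final theorem from the lemmas already proved, following the standard Noether--Lefschetz monodromy argument. The key decomposition
\[ H^2(Y) = H^2(Y)_{\van}\oplus j^* H^2(X') \]
is in hand, together with the facts that $h^2(X')=1$ (Lemma~\ref{lembe}) and that the monodromy acts transitively on the vanishing cycles. The strategy is to upgrade the transitivity of the monodromy action on the \emph{set} of vanishing cycles to \emph{irreducibility} of the monodromy representation on the $\Q$-vector space $H^2(Y)_{\van}$, and then to apply a Deligne-type semisimplicity or Hodge-theoretic argument to conclude that $H^2(Y)_{\van}$ admits no proper nonzero sub-Hodge-structure.

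First I would recall the general principle (as in \cite[Section 3.2]{Voi2}) that if all vanishing cycles are conjugate under monodromy, then the monodromy representation on $H^2(Y)_{\van}$ is irreducible; this uses the Picard--Lefschetz formula, which expresses each monodromy transformation as a transvection along a vanishing cycle $\delta$, so any monodromy-invariant subspace either contains every $\delta$ or is contained in the orthogonal complement $\bigcap \delta^\perp$. Next I would invoke the theorem of the fixed part / Deligne semisimplicity for the variation of Hodge structure over the smooth locus $U\subset\Ps^1$ of the pencil: any sub-Hodge-structure of $H^2(Y)_{\van}$ is monodromy-invariant, hence by irreducibility is either $0$ or all of $H^2(Y)_{\van}$. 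Thus $H^2(Y)_{\van}$ is a simple Hodge structure.

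The dichotomy then falls out by looking at the $(1,1)$-part. If $H^2(Y)_{\van}$ is entirely of type $(1,1)$, then $H^{2,0}(Y)=H^{2,0}(Y)_{\van}\oplus H^{2,0}(j^*H^2(X'))$; but the second summand is the image of the surface $X'$ whose $h^2=1$ forces $H^{2,0}(X')=0$, so $H^{2,0}(Y)=0$, i.e.\ $p_g(Y)=0$. If instead $H^2(Y)_{\van}$ is not of pure type $(1,1)$, then by simplicity it contains no nonzero sub-Hodge-structure of pure type $(1,1)$, so $H^2(Y,\Z)\cap H^{1,1}$ is contained in the complementary summand $j^*H^2(X')$; since $h^2(X')=1$ this shows $\rho(Y)\le 1$, and as $Y$ carries at least the hyperplane class we get $\rho(X_H)=1$. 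I would phrase the conclusion precisely in terms of $\rho(X_H)$ using that $H^2(Y)_{\van}\oplus j^*H^2(X')$ is an orthogonal decomposition compatible with the Hodge structure, so that the N\'eron--Severi group splits accordingly.

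The main obstacle I anticipate is the step asserting that the semisimplicity/monodromy argument applies verbatim when $X'$ is only a non-projective small resolution and $\tilde X$ is the blow-up in the pencil. Deligne's global invariant cycle and semisimplicity theorems are stated for smooth projective (or K\"ahler) total spaces, whereas here $\tilde X$ need not be K\"ahler. I would need to check that the fibers $Y=X_0$ of the pencil are themselves smooth projective surfaces (so the VHS on $U$ is a genuine polarized VHS of weight two), and that the relevant monodromy statements depend only on the smoothness of the family over $U$ and the Picard--Lefschetz description of local monodromy, not on K\"ahlerness of the ambient $\tilde X$. This is exactly the point where the earlier lemmas---that the $X'_t$ form a Lefschetz pencil and that the vanishing cycles are all conjugate---were arranged to make the argument go through, so I expect the resolution to be a careful citation rather than a new idea.
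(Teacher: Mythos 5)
Your proposal follows essentially the same route as the paper's proof: transitivity of the monodromy action on the vanishing cycles gives irreducibility of the monodromy representation on $H^2(Y)_{\van}$, which for a very general member yields irreducibility of the Hodge structure on $H^2(Y)_{\van}$, and the dichotomy then falls out of the decomposition $H^2(Y)=H^2(Y)_{\van}\oplus j^*H^2(X')$ together with $h^2(X')=1$ (the paper simply cites \cite{Voi2}, Theorem 3.27 and Corollary 3.28, for the first two steps). Two cosmetic corrections: the passage from monodromy-irreducibility to Hodge-irreducibility at a very general point is Voisin's countability argument for Hodge loci rather than the fixed-part/semisimplicity theorem, and $j^*H^2(X')$ is pure of type $(1,1)$ because it is spanned by the restriction of the hyperplane class---not because ``$H^{2,0}(X')=0$,'' which is not Hodge-theoretically meaningful on the non-K\"ahler threefold (not surface) $X'$.
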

\begin{proof} 
 Since $H^2(X_H)_{\van}$ is generated by the vanishing cycles and the vanishing cycles are conjugated under the monodromy we have that the monodromy representation on $H^2(X_H)_{\van}$ is irreducible (cf. the proof of \cite[Theorem 3.27]{Voi2}). This implies that for a very general $X_H$ the Hodge structure of $H^2(X_H)_{\van}$ is irreducible (cf. \cite[Corollary 3.28]{Voi2}). Hence if for a very general $X_H$ we have $H^2(X_H)_{\van}\cap H^{1,1}\neq 0$ then $H^2(Y)$ is of pure $(1,1)$-type and therefore $X_H$ satisfies $p_g=0$.
\end{proof}
\bibliographystyle{plain}
\bibliography{remke2}

 \end{document}